\numberwithin{equation}{section} %% Comment out for sequentially-numbered
\numberwithin{figure}{section} %% Comment out for sequentially-numbered
  \theoremstyle{plain}
  \newtheorem{thm}{Theorem}[section]
  \theoremstyle{plain}
  \theoremstyle{plain}
  \theoremstyle{remark}
  \newtheorem{rem}[thm]{Remark}
  \theoremstyle{remark}
  \theoremstyle{plain}
  \newtheorem{lem}[thm]{Lemma}
  \theoremstyle{definition}
\providecommand{\keywords}[1]
{
\small	
\textbf{\textit{Keywords. }} #1
}
\newcommand{\R}{\mathbb{R}}
\begin{document}

\title[A family of Solutions of the n-Body problem]{Bifurcation of periodic orbits for the $N$-body problem, from a non geometrical family of solutions.}
\author[Oscar Perdomo, Andr\'es Rivera and Johann Su\'arez]
{Oscar Perdomo$^1$, Andr\'es Rivera$^2$, Johann Su\'arez$^3$}
\date{\today}

\address{$^1$ Department of Mathematics. 
	Central Connecticut State University. 
	New Britain. CT 06050}
\email{perdomoosm@cssu.edu}

\address{$^2$ Departamento de Ciencias Naturales y Matem\'aticas. 
	Pontificia Universidad \\
	Javeriana Cali, Facultad de Ingenier\'ia. 
	Calle 18 No. 118--250 Cali, Colombia}
\email{amrivera@javerianacali.edu.co}

\address{$^3$ Departamento de Matem\'aticas
	Universidad del Valle, Facultad de Ciencias. Cali, Colombia}
\email{johann.j.suarez@correounivalle.edu.co}

%\curraddr{Department of Mathematics\\
%Central Connecticut State University\\
%New Britain, CT 06050\\}
%
%
%\email{ perdomoosm@ccsu.edu}

\begin{abstract}

Given two positive real numbers $M$ and $m$ and an integer $n>2$, it is well known that we can find a family of solutions of the $(n+1)$-body problem where the body with mass $M$ stays put at the origin and the other $n$ bodies, all with the same mass $m$, move on the $x$-$y$ plane following ellipses with eccentri\-city $e$. It is expected that this geometrical  family that depends on $e$, has some bifurcations that produces solutions where the body in the center moves on the $z$-axis instead of staying put in the origin. By doing analytic continuation of a periodic numerical solution of the $4$-body problem --the one displayed on the video  http://youtu.be/2Wpv6vpOxXk --we surprisingly discovered that the origin of this periodic solution is not part of the geometrical family of elliptical solutions parametrized by the eccentricity $e$. It comes from a not so geometrical but easier to describe family.  Having notice this new family, the authors find an exact formula for the bifurcation point in this new family and use it to show the existence of  non planar periodic solution for any pair of masses $M$, $m$ and any integer $n$. As a particular example we find a solution where three bodies with mass $3$ move around a body with mass $7$ that moves up and down.
\end{abstract} %Being fan of prime numbers, a

\subjclass[2010]{70F10, 37C27, 34A12.}
\keywords{$N$-body problem, periodic orbits, bifurcations, analytic continuation method.}
\maketitle

%%%%%%%%%%%%%%%%%%%%%%5
\section{Introduction}
%%%%%%%%%%%%%%%%%%%%%%%%%%%%

The spatial isosceles solutions of the three body problem are solutions where two bo\-dies of equal mass $m$ have initial positions and velocities symmetric with respect to the $z$-axis which passes through their center of mass at the origin, the third body of mass $M$ moves on the $z$-axis, and at any time the three bodies form an isosceles triangle. The youtube video https://www.youtube.com/watch?v=fSmQyeKcj5k shows some images of a periodic spatial solution. Due to the importance and seve\-ral appli\-cations that the spatial three body configuration offers, nowadays there is a large numbers of papers devoted to this problem, see for example  \cite{Corbera2,D,OR,P1,P2,Rivera,X,YO} and specially \cite{MW} and the references therein. In all cited papers, the existence of periodic spatial isosceles solutions has been considered using variational methods, nume\-rical methods or techniques of analytical continuation. In most of the cases it is assumed that the body on the $z$-axis has a small mass $M$  or a mass $M=0$, see \cite{Corbera2,Galan,OR,Rivera}. In the latter case we obtain the \textit{Sitnikov problem}. For example in \cite{Corbera2} using the analytical continuation method of Poincaré, the authors prove analitically the existence of periodic and quasi-periodic spatial isosceles solutions,  for $M>0$ sufficiently small as a continuation of some well known periodic solutions of the \textit{reduced circular Sitnikov problem} (when the body in the $z$-axis has infinitesimal mass $M=0$ and the other bodies move in circular orbits of the two-body problem). Using variational methods and for $M>0$ non necessarily small, the authors in \cite{YO} prove numerically the existence of spatial isosceles solutions as minimizers of the corresponding Lagragian action functional.  A natural generalization of  the spatial isosceles solutions are those where $n$ bodies move on the vertex of a regular polygon perpendicular to the $z$-axis and the $(n+1)$-th body moves along the $z$-axis. An example of this type of solution can be seen in the youtube video https://www.youtube.com/watch?v=PtEMb6Rvflg.

In order to describe these motions more precisely, let us define
\[
\mathcal{R}[2\pi/n]=\begin{pmatrix}
\cos(2\pi/n)& -\sin(2\pi/n) & 0\\
\cos(2\pi/n)& \sin(2\pi/n) & 0\\
0 & 0 & 1\\
\end{pmatrix}.
\]
A direct computation (also see \cite{P}), shows that 

\begin{thm}\label{t7}
The functions
\[
x(t)=\left(0,0,f(t)\right),\quad y^{k}(t)=\mathcal{R}^{k}[2(k-1)\pi/n]y^{1}(t), \quad k=1,\dots, n,
\]
with 
\[
y^1(t)=\left(r(t)\cos(\theta(t))\, ,\, r(t)\sin(\theta(t)),\,-\frac{M}{m n}\, f(t)\right),
\]
satisfying 
$$f(0)=0,\quad \dot{f}(0)=b,\quad r(0)=r_0,\quad \dot{r}(0)=0,\quad \theta(0)=0 ,\quad \dot{\theta}(0)=\frac{a}{r^2_{0}},$$
provides a solution of the $(n+1)$-body problem with $n \geq 2$ and masses $M\geq 0$ for the body moving along $x(t)$ and mass $m>0$ for each body moving along the function $y^k(t)$, if and only if

\begin{equation}\label{e2}
\begin{split}
\ddot{f}&=-\frac{(M+mn)f}{h^3},\\
 \ddot{r} &= \frac{r_0^2 a^2 }{r^3}-\frac{m \lambda_n }{r^2}-\frac{Mr}{h^3},\\ \dot{\theta}&=\frac{r_0 a}{r^2},
\end{split}
\end{equation}

%\begin{equation}\label{e2}
%\begin{split}
%\ddot{f}&=-\frac{(M+mn)f}{h^3},\\
% \ddot{r} &= \frac{r_0^2 a^2 }{r^3}-\frac{m \lambda_n }{r^2}-\frac{Mr}{h^3} \quad\hbox{and}\quad r^2\dot{\theta}=r_0 a,
%\end{split}
%\end{equation}

where  $\displaystyle{h=\sqrt{r^2+\Big(\frac{M+n m}{m n}\Big)^2f^2}}$ and $\displaystyle{\lambda_n=\sum_{k=1}^{n-1} \frac{1-\hbox{e}^{i \frac{2 \pi k}{n}}}{|\hbox{e}^{i \frac{2 \pi k}{n}}-1|^3}=\frac{1}{4}\sum_{k=1}^{n-1}\csc\Big(\frac{2 \pi k}{n}\Big).}$
%\[
%\lambda_n=\sum_{k=1}^{n-1} \frac{1-\hbox{e}^{i \frac{2 \pi k}{n}}}{|\hbox{e}^{i \frac{2 \pi k}{n}}-1|^3}==\frac{1}{4}\sum_{k=1}^{n-1}\csc\big(k\pi/n\big)
%\]
\end{thm}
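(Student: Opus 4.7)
The plan is to substitute the ansatz directly into Newton's gravitational equations for the $(n+1)$-body problem and exploit the built-in $\mathbb{Z}_n$ rotational symmetry about the $z$-axis. Because the configuration is invariant under the rotation $\mathcal{R}[2\pi/n]$ and under time translation, it suffices to verify the law of motion for only two representative bodies: the central body along $x(t)=(0,0,f(t))$ and one ring body along $y^1(t)$. The equations for $y^2,\dots,y^n$ follow from the $y^1$-equation by applying the rotation. A bonus of this symmetry is that the center of mass condition forces each $y^k$ to have $z$-coordinate $-Mf/(mn)$, which is already built into the ansatz and guarantees consistency between the $x$-equation and the $z$-component of the $y^1$-equation.

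First I would compute the two types of mutual distances that appear. All $y^k$ sit at the same height $-Mf/(mn)$ on a horizontal circle of radius $r$, so the distance from $x(t)$ to any $y^k$ is the common quantity $h=\sqrt{r^{2}+((M+nm)/(mn))^{2}f^{2}}$, while the distance between $y^j$ and $y^k$ is the horizontal chord $r\,|e^{i2\pi(k-j)/n}-1|$. Symmetry then collapses the equation for $x(t)$ to its $z$-component, where $n$ equal contributions sum to $\ddot f=-(M+mn)f/h^{3}$. Next I would turn to $y^1(t)$. Its $z$-component, using $z_1=-Mf/(mn)$, is automatically compatible with the $f$-equation because the other ring bodies lie at the same height and contribute no vertical force. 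For the horizontal components I would identify the $xy$-plane with $\mathbb{C}$, write $y^1_{xy}=re^{i\theta}$, and split the resulting complex equation via $\ddot y^1_{xy}=(\ddot r-r\dot\theta^{2})e^{i\theta}+i(r\ddot\theta+2\dot r\dot\theta)e^{i\theta}$ into radial and tangential parts. The tangential part gives conservation of angular momentum $r^{2}\dot\theta=\mathrm{const}$, which combined with the initial conditions produces the stated formula for $\dot\theta$, while the radial part yields the stated $\ddot r$ equation.

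The main obstacle lies in evaluating the horizontal resultant of the forces exerted on $y^1$ by the other $n-1$ ring bodies. After factoring out $re^{i\theta}/r^{3}$, this reduces to the trigonometric sum $S_n=\sum_{k=1}^{n-1}(e^{i2\pi k/n}-1)/|e^{i2\pi k/n}-1|^{3}$. The key step is to show that $S_n$ is real and negative: pairing the index $k$ with $n-k$ collapses the imaginary part by complex-conjugation, and the identity $|1-e^{i2\pi k/n}|=2|\sin(\pi k/n)|$ identifies the remaining real part with $-\lambda_n$. This shows the inter-ring force is purely radial with magnitude $m^{2}\lambda_n/r^{2}$ per unit mass, producing the term $-m\lambda_n/r^{2}$ in the radial equation. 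Once this sum is nailed down, every other step is a direct computation, and the ``if and only if'' follows because each reduction step is an equivalence under the symmetry assumptions encoded in the ansatz.
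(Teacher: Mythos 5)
Your proposal is correct and is essentially the same argument the paper has in mind: Theorem \ref{t7} is justified there only as ``a direct computation'' with a reference to \cite{P}, and your plan (reduce to the central body and one ring body by $\mathbb{Z}_n$-equivariance, use the center-of-mass relation $z_k=-Mf/(mn)$ to get the common distance $h$ and the consistency of the vertical equations, split the horizontal equation into tangential and radial parts, and evaluate the pairing-symmetrized sum $\sum_{k=1}^{n-1}(e^{i2\pi k/n}-1)/|e^{i2\pi k/n}-1|^{3}=-\lambda_n$ via $|1-e^{i2\pi k/n}|=2\sin(\pi k/n)$) is exactly that computation, and it even recovers the correct value $\lambda_n=\frac14\sum_{k=1}^{n-1}\csc(k\pi/n)$ used later in the paper.
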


Related to the system (\ref{e2}) we mention some important results obtained in \cite{P,P1,P2,P3} by the first author of this paper. Firstly, a reduction of the system (\ref{e2}) to a single second order differential equation, see \cite{P}. Secondly, a rigorous mathematical proof of the periodicity of a solution of the 3-body problem describe by (\ref{e2}), see \cite{P1}. This was achieved through a numerical method that keeps track of the round-off error, developed in \cite{P3} and also by a lemma that can be viewed as a numerical version of the implicit function theorem, see \cite{P1}. Finally, in \cite{P2} the author describes a new family of symmetric periodic solutions of (\ref{e2}) that contains a nontrivial bifurcation point. The diagram of periodic solutions in the space of initial conditions (similar to the one in Figure \ref{fig1}) shows  four branches emanating from this bifurcation solution. One is unbounded, another has as a  limit point a solution where there is collision of two bodies, the third branch has as a limit point a solution where there is collision of the three bodies and the fourth one ends on a family of trivial solutions.

The main purpose of this paper is to provide an analysis of the properties of the family of periodic solutions in the fourth branch mentioned above and to provide an explicit formula for its limit point. To this end, assuming nonzero angular momentum for (\ref{e2}) we will consider an appropriate system which symmetries can be used to obtain the periodic solutions in the fourth branch; see Section 2. In Section 3, we prove Theorems \ref{red1} and \ref{red2} which are the statements of our main results. It is interesting to compare our results in this paper with those in \cite{Corbera2}. Both papers analytically show the existence of periodic solutions using analytic continuation techniques. The new solutions in \cite{Corbera2} emanate from solutions of the Sitnikov problem while those in this paper emanates from what we can call ``\textit{part of circular solutions}'' where the two masses $m$ and $M$, are arbitrary. We call them part of circular solutions because $n$ of the $(n+1)$ bodies move on a circle but they do not necessarily cover the whole circle. Finally, numerical validation of the theoretical results  obtained in Section 3 are presented in Section 4.

\section{The reduced problem and symmetries}
A simple inspection of the system (\ref{e2}), shows that $r^2\dot{\theta}$ is the first integral of the angular momentum $C=r_{0}a$. Having said this, throughout this document we consider only solutions of (\ref{e2}) with nonzero angular momentum, i.e., $C=r_{0}a\neq 0$. This assumption allows us to consider only the initial value problem,
\begin{equation}
\begin{split}\label{reduced1}
\ddot{f}&=-\frac{(M+mn)f}{h^3}, \qquad \qquad \, f(0)=0,\quad \dot{f}(0)=b,\\
\ddot{r} &= \frac{r_0^2 a^2 }{r^3}-\frac{m \lambda_n }{r^2}-\frac{Mr}{h^3}, \quad \quad r(0)=r_0,\quad \dot{r}(0)=0.
\end{split}
\end{equation}
%having initial conditions
%\begin{equation}\label{ic}
%f(0)=0,\quad \dot{f}(0)=b, \qquad  r(0)=r_0,\quad \dot{r}(0)=0.
%\end{equation}

We can check that, if $\phi(t)=(f(t), r(t))$ is a solution of (\ref{reduced1}) and 
\[
\theta(t)=\int_{0}^{t}\frac{r_{0}a}{r^{2}(s)}ds,
\]
then $\overline{\phi}(t)=(f(t), r(t),\theta(t))$ is a solution of (\ref{e2}). From now on, we denote by 
\[
F(a,b,t)=f(t), \quad R(a,b,t)=r(t), \quad \Theta(a,b,t)=\theta(t),
\] 

the solutions of the system (\ref{e2}) with initial conditions
\begin{equation}\label{initial conditions}
f(0)=0,\quad \dot{f}(0)=b,\quad r(0)=r_0,\quad \dot{r}(0)=0,\quad \theta(0)=0 ,\quad \dot{\theta}(0)=\frac{a}{r^2_{0}}.
\end{equation}

\begin{rem}\label{rem1}
	If for some $a$ and $b$, $f(t)$ and $r(t)$ are $T$-periodic of the system (\ref{reduced1}) then $(f(t),r(t),\theta(t))$ defines a periodic solution of the $(n+1)$-th body, if and only if $\theta(T)$ is equal to $n_{1} \pi/n_2$ with $n_1$ and $n_2$ whole numbers. See \cite{P2}. In general, $T$-periodic solutions of the systems (\ref{reduced1}) define \textit{reduced-periodic} solutions of the $(n+1)$-body problem. This is, solutions with the property that every $T$ unites of time, the positions and velocities of the $(n+1)$ bodies only differ by an rigid motion in $\R^{3}.$
\end{rem}
The existence of periodic solutions of (\ref{e2}) becomes simpler if we restrict our seach to periodic solutions with symmetries. In such a case,  the following lemma, see \cite{P2} provides a useful result.

\begin{lem}\label{simmetry} Let $\phi(t)=(F(a,b,t), R(a,b,t))$ be a solution of (\ref{reduced1}).
\begin{enumerate}
\item[$\rhd$]If for some $0<T$ we have

		\begin{equation*}\label{s1}
		F(a,b,T)=0 \quad \text{and} \quad R_{t}(a,b,T)=0, \qquad (\dagger) 
		\end{equation*}\\
then $f(t)=F(a,b,t)$ and $r(t)=R(a,b,t)$ are both $2T$-periodic functions.\\
\item[$\rhd$] If for some $0<T$ we have

		\begin{equation*}\label{s2}
		F_{t}(a,b,T)=0 \quad \text{and} \quad R_{t}(a,b,T)=0, \qquad (\dagger \dagger) 
		\end{equation*}\\
then $f(t)=F(a,b,t)$ and $r(t)=R(a,b,t)$ are both $4T$-periodic functions.
\end{enumerate}
\end{lem}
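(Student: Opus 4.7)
The plan is to exploit two discrete symmetries of the reduced system (\ref{reduced1}) together with uniqueness of solutions to the initial value problem (valid away from $r=0$, which is where the right-hand sides are smooth). Given either hypothesis $(\dagger)$ or $(\dagger\dagger)$ at time $T$, I will construct a second solution that agrees with $(F,R)$ at $t=T$ in position and velocity, conclude by uniqueness that the two agree for all $t$, and then read off the claimed periodicity by evaluating at $t=0$.

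First I would record the two relevant symmetries of (\ref{reduced1}). Because $h^2 = r^2 + ((M+nm)/(mn))^2 f^2$ is even in $f$, the right-hand side of the $r$-equation is invariant under $f\mapsto -f$ while the right-hand side of the $f$-equation is odd in $f$; hence if $(f(t), r(t))$ solves (\ref{reduced1}) so does $(-f(t), r(t))$. Since neither right-hand side contains first derivatives, the system is also invariant under time reversal: $(f(-t), r(-t))$ is a solution whenever $(f(t),r(t))$ is.

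For case $(\dagger)$, I set $\psi(t) = (-F(a,b,2T-t),\ R(a,b,2T-t))$. By composing both symmetries, $\psi$ solves (\ref{reduced1}). At $t=T$ the hypotheses $F(a,b,T)=0$ and $R_t(a,b,T)=0$ give $\psi(T)=(F,R)(a,b,T)$ and $\dot\psi(T)=(F_t,R_t)(a,b,T)$; uniqueness then forces $\psi(t)\equiv (F,R)(a,b,t)$, i.e.\ $F(a,b,t)=-F(a,b,2T-t)$ and $R(a,b,t)=R(a,b,2T-t)$. Setting $t=0$ shows that the full state at time $2T$ agrees with the initial state, so both components are $2T$-periodic.

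For case $(\dagger\dagger)$, I use only the time-reversal symmetry and set $\psi(t) = (F(a,b,2T-t),\ R(a,b,2T-t))$. The hypotheses $F_t(a,b,T)=R_t(a,b,T)=0$ give $\dot\psi(T)=-(F_t,R_t)(a,b,T)=(0,0)$, matching $(F_t,R_t)(a,b,T)$, while positions agree trivially; uniqueness yields $\psi\equiv (F,R)$. Evaluating at $t=0$ shows $F(a,b,2T)=0$, $R(a,b,2T)=r_0$, $R_t(a,b,2T)=0$, but $F_t(a,b,2T)=-b$. So the state at $t=2T$ is the initial state with $b$ replaced by $-b$; invoking the $f\mapsto -f$ symmetry gives $(F,R)(a,b,2T+t)=(-F,R)(a,b,t)$, and iterating this identity once produces $(F,R)(a,b,4T+t)=(F,R)(a,b,t)$, yielding $4T$-periodicity. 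The only delicate step in the whole argument is the careful verification of the two symmetries (a one-line check each); everything else is bookkeeping with the uniqueness theorem.
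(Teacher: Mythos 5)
Your argument is correct: the two symmetries you verify (oddness of the $f$-equation and evenness of the $r$-equation under $f\mapsto -f$, plus time reversal of the autonomous, first-derivative-free system) together with uniqueness of the initial value problem give exactly the reflection identities needed, and your handling of the $(\dagger\dagger)$ case — where the state at $2T$ is the initial state with $b$ replaced by $-b$, so one more application of the $f\mapsto-f$ symmetry yields $4T$-periodicity — is the right bookkeeping. The paper itself does not prove this lemma but quotes it from \cite{P2}, and your symmetry-plus-uniqueness proof is the standard argument underlying that reference, so there is nothing to flag.
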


It is worth to mentioning that the solutions $\phi(t)=(F(a,b,t), R(a,b,t))$ that satisfies $(\dagger)$ are called \textit{odd} solutions because  $f(t)=F(a,b,t)$ is an odd function with respect to $t=0$. On the other hand, if $\phi(t)$ satisfies $(\dagger\dagger)$ they are called \textit{odd/even} solutions because $f(t)=F(a,b,t)$ is an odd function with respect to $t=0$, but with respect to $t=T$,  both functions  $f(t)=F(a,b,t)$ and $r(t)=R(a,b,t)$ are even. Furthermore, we point out that every odd/even solution is also an odd solution.

\section{Main results}

\subsection{Periodic solutions for the reduced problem} In this section we prove the existence of a one parametric a family of periodic solutions for the reduced $(n+1)$-body problem (\ref{reduced1}). %depending continuously of the parameter $b$.
\begin{thm}\label{red1} For any $n\ge 2$, let us define $\displaystyle{\lambda_n=\frac{1}{4}\sum_{k=1}^{n-1}\csc\big(k\pi/n\big)}$. Assume that $m, M$ and $n$ satisfy that $\lambda_n\ne n p^2+\frac{M}{m}(p^2-1)$ for every positive integer  $p$. Then, for any positive real number $r_0$ there exist  $b\ne0$ near $0$,   $T>0$ near $\pi \sqrt{\frac{r_0^3}{n m+M}}$, and $a>0$ near $\sqrt{\frac{\lambda_n m+M}{r_0}}$ that provides an odd $2T$-periodic solution of the reduced $(n+1)$-body problem with initial conditions described in Theorem \ref{t7}.
\end{thm}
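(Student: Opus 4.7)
The plan is to apply Poincar\'e's analytic continuation (implicit function theorem) from the trivial circular solution $b = 0$, $a = a_0 := \sqrt{(\lambda_n m + M)/r_0}$. With $b = 0$ the first equation in (\ref{reduced1}) forces $F(a,0,t) \equiv 0$, hence $h \equiv r$ and the $r$-equation becomes a one-dimensional central-force problem whose right-hand side vanishes at $(r, \dot r) = (r_0, 0)$ exactly when $a = a_0$; this produces the equilibrium $R(a_0, 0, t) \equiv r_0$, corresponding to $n$ of the bodies moving uniformly on the circle of radius $r_0$ while the $(n+1)$-th body sits at the origin. To obtain odd $2T$-periodic solutions via Lemma~\ref{simmetry}\,$(\dagger)$, I look for $(a, b, T)$ near $(a_0, 0, T_0)$, where $T_0 := \pi\sqrt{r_0^3/(M+nm)}$, satisfying $F(a,b,T) = 0$ and $R_t(a,b,T) = 0$.

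The obstruction is that $F(a, 0, T) \equiv 0$ identically in $(a, T)$, so the Jacobian of $(F, R_t)$ in $(a, T)$ is degenerate along the slice $b = 0$. I desingularize by writing $F(a, b, T) = b\, \tilde F(a, b, T)$ with $\tilde F$ smooth and $\tilde F(a, 0, T) = F_b(a, 0, T)$, and then apply the implicit function theorem to
\[
\Phi(a, b, T) := \bigl(\tilde F(a, b, T),\ R_t(a, b, T)\bigr) = 0,
\]
solving for $(a(b), T(b))$ near $b = 0$. This automatically gives $F = b\tilde F = 0$, so Lemma~\ref{simmetry}\,$(\dagger)$ delivers the odd $2T$-periodic solution, and nontriviality is guaranteed by $b \neq 0$.

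The decisive step is to compute $D_{(T,a)}\Phi$ at $(a_0, 0, T_0)$. Linearizing the $f$-equation at $b = 0$ around $R \equiv r_0$ gives $\ddot F_b = -\omega^2 F_b$ with $F_b(0) = 0$, $\dot F_b(0) = 1$, where $\omega := \sqrt{(M + nm)/r_0^3}$, so $\tilde F(a_0, 0, t) = \omega^{-1}\sin(\omega t)$; this both confirms $\tilde F(a_0, 0, T_0) = 0$ and yields $\partial_T \tilde F(a_0, 0, T_0) = \cos(\omega T_0) = -1$. Since $R(a_0, 0, \cdot) \equiv r_0$, also $\partial_T R_t(a_0, 0, T_0) = 0$. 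Differentiating the reduced $r$-equation in $a$ at $a_0$ yields the inhomogeneous harmonic oscillator
\[
\ddot u + \Omega^2 u = \frac{2 a_0}{r_0}, \qquad u(0) = \dot u(0) = 0, \qquad \Omega := \frac{a_0}{r_0} = \sqrt{\frac{\lambda_n m + M}{r_0^3}},
\]
whose explicit solution $u(t) = (2 r_0/a_0)(1 - \cos(\Omega t))$ gives $\partial_a R_t(a_0, 0, T_0) = (2 r_0 \Omega / a_0) \sin(\Omega T_0)$. The Jacobian is therefore upper-triangular with diagonal entries $-1$ and $(2 r_0 \Omega/a_0)\sin(\Omega T_0)$, and is invertible iff $\Omega T_0/\pi = \sqrt{(\lambda_n m + M)/(M + nm)}$ is not a positive integer $p$; an elementary rearrangement identifies $(\lambda_n m + M)/(M + nm) = p^2$ with the arithmetic condition $\lambda_n = np^2 + (M/m)(p^2 - 1)$ excluded by hypothesis. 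The implicit function theorem then produces the curve $b \mapsto (a(b), T(b))$ with the stated asymptotics, and continuity ensures $a(b), T(b) > 0$ for small $b$. The main obstacle I anticipate is precisely matching the resonance condition $\Omega/\omega \notin \mathbb{Z}$ to the stated arithmetic hypothesis on $\lambda_n$; once the desingularization $F = b\tilde F$ and the variational computation above are in hand, the remainder is a routine invocation of the implicit function theorem.
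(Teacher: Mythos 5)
Your proposal is correct and follows essentially the same route as the paper: the same desingularization $F(a,b,T)=b\,\tilde F(a,b,T)$, the same variational equations giving $\tilde F(a_0,0,t)=\omega^{-1}\sin(\omega t)$, $\partial_T\tilde F=-1$, $R_{tt}=0$ and $\partial_a R_t=2\sin(\Omega T_0)$, and the same identification of the nonresonance condition $\sqrt{(\lambda_n m+M)/(nm+M)}\notin\mathbb{Z}$ with the hypothesis $\lambda_n\ne np^2+\frac{M}{m}(p^2-1)$. The only cosmetic difference is that you phrase the nondegeneracy as invertibility of the $2\times 2$ Jacobian in $(T,a)$, while the paper phrases it as the $b$-component of $\nabla\tilde F\times\nabla R_t$ being nonzero, which is equivalent.
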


\begin{proof} 
	
	 For fixed values of $m,M,r_0$, let $F(a,b,T)$, $R(a,b,T)$ be the solutions of (\ref{reduced1})-(\ref{initial conditions}) evaluated at $t=T$. By Lemma \ref{simmetry} it follows that if for some $a,b$ and $T$ we have that
	\begin{eqnarray*}\label{eq zeros}
	F(a,b,T)=0\quad\hbox{and}\quad R_t(a,b,T)=0,
	\end{eqnarray*}
	
	then $t\longrightarrow f(t)=F(a,b,t)$ and $t\longrightarrow r(t)=R(a,b,t)$ are odd and even functions respectively and both functions are periodic with period $2T$. An easy computation shows that
	\[
	F(a,0,T)=0, \quad \forall T\in \R.
	\]
	
	From here we can deduce the following
	\begin{itemize}
		\item[a)]  For all $(a,b,T)$ we can express $F$ as 
		\begin{eqnarray}\label{def of tilde F new}
		F(a,b,T)=b \tilde{F}(a,b,T), 
		\end{eqnarray}
		with $\tilde{F}$ some smooth function. Therefore, an easy a direct computation shows that $\displaystyle{\tilde{F}(a,0,T)=F_{b}(a,0,T)}$. Moreover, for all $(a,T)$ it follows
		
		\begin{equation*}
	     \tilde{F}_{t}(a,0,T)=F_{bt}(a,0,T), \quad 
		\tilde{F}_{a}(a,0,T)=F_{ba}(a,0,T), \quad \text{and} \quad  \tilde{F}_{b}(a,0,T)=F_{bb}(a,0,T)/2.
		\end{equation*}	
		\,
	    \item[b)] If $a=\displaystyle{a_{0}=\sqrt{\frac{\lambda_{n}m+M}{r_0}}}$, we have that $F(a_{0},0,T)=0$ and $R(a_{0},0,T)=r_0$ solves (\ref{reduced1}). Therefore,
	    
		\[
		F(\alpha(T))=R_{t}(\alpha(T))=0, \quad \text{with} \quad \alpha(T)=(a_{0},0,T), \quad T\in \R.
		\]\,
		
		\noindent
		The path $\alpha(T)$, more precisely, the pseudo periodic solutions of the $(n+1)$-body problem induced by the equations $(\dagger)$,  can be viewed geometrically as the pseudo periodic solutions where $n$ of the $(n+1)$-bodies move along part of a circle and the $(n+1)$-body stays put 
		in the center. From this collection we will find a bifurcation point, a particular value for $T$, that will provide the nontrivial periodic solutions.
	\end{itemize} 

    The previous observations suggest to study the solutions of the system
    \begin{eqnarray}\label{eq zeros2}
    \tilde{F}(a,b,T)=0 \quad \hbox{and}\quad R_{t}(a,b,T)=0.
    \end{eqnarray}
    
    To this end we will use relation (\ref{def of tilde F new}) to compute derivatives of $\tilde{F}$ and $R_{t}$. Recall that 
	\begin{eqnarray}\label{eFnew}
	F_{tt}&=&-\frac{(M+mn)F}{h^{3}},\\ \label{eRnew}
	R_{tt}&=&\frac{a^2 r_0^2}{R^3}-\frac{m \lambda_n }{R^2}-\frac{MR}{h^3},
	\end{eqnarray}\\
	where $\displaystyle{h=\sqrt{R^2+\Big(\frac{M+n m}{m n}\Big)^2F^2}}$. Taking the partial derivative with respect to $b$ on both sides of Equation (\ref{eFnew}) and evaluating at $\alpha(t)$ give us that $F_b(\alpha(t))$ satisfies
	\[
	\ddot{u}=-\frac{(M+mn)}{r_{0}^{3}}u, \quad \text{with} \quad u(0)=0, \,\, \dot{u}(0)=1.
	\]
	
	Therefore, 
	\begin{eqnarray}\label{dfbnew}
	F_b(\alpha(t))=\left(\frac{M+m n}{r_{0}^{3}}\right)^{-1/2} \sin \left(\left(\frac{M+m n}{r_{0}^{3}}\right)^{1/2} t\right).
	\end{eqnarray}
	The equation above shows that, if 
	\begin{eqnarray}
	T_{0}=\pi\sqrt{\frac{ r_0^3}{m n+M}}\, ,
	\end{eqnarray}
	
	then 
	\[
	\tilde{F}(\alpha(T_{0}))=F_{b}(\alpha(T_{0}))=0.
	\]
	
	In consequence, from $a)$ and $b)$ it follows that $\alpha(T_{0})$ satisfies
   \[
   \tilde{F}(\alpha(T_{0}))=0,  \quad \hbox{and}\quad R_{t}(\alpha(T_{0}))=0,
   \]
   
   showing that the point $(a_{0},0,T_{0})$ solves (\ref{eq zeros2}). Finally, from Equation (\ref{dfbnew}) we get that
	\begin{eqnarray}\label{dFtildetnew}
	\tilde{F}_t(\alpha(T_{0}))=F_{bt}(\alpha(T_{0}))=-1.
	\end{eqnarray}

	\vspace{0.2 cm}
	\noindent
	Now, we take the derivative with respect to $a$ on both sides of Equation (\ref{eRnew}) and evaluate at  $\alpha(t)$. Then $R_a(\alpha(t))$ satisfies
	\[
	\ddot{v}=2\left(\frac{\lambda_{n}m + M}{r_{0}^{3}}\right)^{1/2}-\frac{(\lambda_{n}m+M)}{r_{0}^{3}}v, \quad \text{with} \quad v(0)=0, \,\, \dot{v}(0)=0,
	\]
	therefore
	\[
	R_a(\alpha(t))=2\left(\frac{\lambda_{n}m+M}{r_{0}^{3}}\right)^{-1/2}\left(1- \cos \left(\left(\frac{\lambda_{n}m+M}{r_{0}^{3}}\right)^{1/2} t\right)\right)
	\]
	From this last expression we deduce
	\[
	R_{at}(\alpha (t))=2 \sin \left(\left(\frac{\lambda_{n}m+M}{r_{0}^{3}}\right)^{1/2} t\right).
	\]
	Applying the same ideas, the function $\displaystyle{R_b(\alpha(t))}$ can be obtained by solving
	\[
	\ddot{w}(t)=-\frac{(\lambda_{n}m+M)}{r_{0}^{3}}w, \quad \text{with} \quad w(0)=0, \,\, \dot{w}(0)=0,
	\]
	therefore
	\[
	R_b(\alpha(t))=R_{bt}(\alpha(t))=0,
	\] 
	for all $t\in \R.$ Further, from (\ref{eRnew}) we can deduce that  $\displaystyle{R_{tt}(\alpha(t))=0}$ for all $t\in \R.$
These computations shows that the gradient of the function $R_{t}$ at $\alpha(T_{0})$ is given by

\begin{eqnarray}\label{gRt}
\nabla R_{t}(\alpha(T_{0}))=\left(2 \sin\left(\pi\sqrt{\frac{\lambda_{n}m+M}{mn+M}}\right),0,0\right).
\end{eqnarray}

Notice that our condition of $\lambda_n$ guarantees that $\nabla R_t(\alpha(T_{0}))$ does not vanish. On the other hand, using $a)$ and the equation (\ref{dFtildetnew}) we obtain
\begin{eqnarray}\label{gRtnew}
\nabla \tilde{F}(\alpha(T_{0}))=\left(F_{ba}(\alpha(T_{0})),\frac{1}{2}F_{bb}(\alpha(T_{0})),-1\right).
\end{eqnarray}

Since 
$$\eta=\nabla \tilde{F}(\alpha(T_{0}))\times \nabla R_t(\alpha(T_{0}))=\left( 0,R_{ta}(\alpha(T_{0})),\frac{1}{2} R_{ta}(\alpha(T_{0})) F_{bb}(\alpha(T_{0})) \right),$$

has its second entry different from zero, by the Implicit Function Theorem there exists  $\epsilon >0$ and a pair of continuous functions such that $T,a:(-\epsilon,\epsilon) \to \R$ 
\begin{equation*}
b\longrightarrow  T=T(b), \quad  b\longrightarrow  a=a(b),
\end{equation*}

with $T(0)=T_{0}$, $a(0)=a_{0}$, such that
\[
\tilde{F}(\gamma_1(b))=0, \quad \text{and} \quad R_t(\gamma_1(b))=0,
\]
with $\gamma_1:(-\epsilon,\epsilon) \to \R^3,$ given by
\begin{equation}\label{gamma1 curve}
b\to \gamma_1(b)=(a(b),b,T(b)), \quad \quad \gamma_1(0)=(a_{0},0,T_{0}).
\end{equation}

Therefore, for each $b\in(-\epsilon,\epsilon)$ it follows
\begin{equation*}
\begin{split}
F(\gamma_1(b))=b\tilde{F}(\gamma_1(b))&=0,  \quad \text{and} \quad R_{t}(\gamma_1(b))=0.
\end{split}
\end{equation*}
In consequence, using Lemma \ref{simmetry} we get that for any $b$, the functions $f(t)=F(a,b,t)$ y $r(t)=R(a,b,t)$ define a  $2T(b)$-periodic solution of the reduced problem (\ref{reduced1}).
%demostrating that over the path $\gamma_1(b)$ the functions $F(a,b,t)$ y $R(a,b,t)$ satisfies ($\dagger$) which by Lemma \ref{simmetry} induce a $2T(b)$-periodic solution of the reduced problem (\ref{eq zeros}) with  $f(t)=F(a(b),b, t)$ an odd function y $r(t)=R(a(b),b,t)$ and even function for each $b\in(-\epsilon,\epsilon)$.

%$$\tilde{F}(a_1,b_1,T_1)=R_t(a_1,b_1,T_1)=0$$
%
%for some $a$ near $a_{*}$ and some $T_1$ near $T$. Since $F(a,b,t)=b\tilde{F}(a,b,t)$, then 
%
%$$F(a_1,b_1,T_1)=R_t(a_1,b_1,T_1)=0$$
%
%and therefore the point $(a_1,b_1,T_1)$ provides a reduced periodic solution.

\end{proof}

\begin{thm}\label{red2} For any $n\ge 2$, let us define $\displaystyle{\lambda_n=\frac{1}{4}\sum_{k=1}^{n-1}\csc\big(k\pi/n\big)}$. Assume that $m,M$ and $n$ satisfy that $\lambda_n\ne n q^2+\frac{M}{m}(q^2-1)$ for every positive even integer $q$. Then, for any positive real number $r_0$ there exist   $b\ne0$ near $0$,   $T_{*}>0$ near $\dfrac{\pi}{2}\sqrt{\frac{r_{0}^{3}}{n m+M}}$ and $a>0$ near $\sqrt{\frac{\lambda_n m+M}{r_0}}$ that provides an ood/even  $4T_{*}$-periodic solution of the reduced $(n+1)$-body problem with initial conditions described in Theorem \ref{t7}.
\end{thm}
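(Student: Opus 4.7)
The plan is to mirror the proof of Theorem \ref{red1}, but now use the odd/even symmetry condition $(\dagger\dagger)$ of Lemma \ref{simmetry} in place of $(\dagger)$. Since $F(a,b,t) = b\tilde{F}(a,b,t)$ with $\tilde{F}$ smooth (item $a)$ in the proof of Theorem \ref{red1}), we have $F_t = b\tilde{F}_t$, so for $b\ne 0$ it suffices to solve the system
\[
\tilde{F}_t(a,b,T) = 0, \qquad R_t(a,b,T) = 0.
\]
I would then apply the Implicit Function Theorem to this pair of equations, solving for $(a,T)$ as functions of $b$ near a carefully chosen base point $(a_0, 0, T_*)$.

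First I would verify that $(a_0, 0, T_*)$ solves both equations. From (\ref{dfbnew}), $F_b(\alpha(t)) = \omega^{-1}\sin(\omega t)$ with $\omega := \sqrt{(M+mn)/r_0^3}$, hence $\tilde{F}_t(\alpha(t)) = F_{bt}(\alpha(t)) = \cos(\omega t)$. Choosing $T_* = \pi/(2\omega) = \tfrac{\pi}{2}\sqrt{r_0^3/(mn+M)}$ forces $\tilde{F}_t(\alpha(T_*)) = \cos(\pi/2) = 0$, and $R_t(\alpha(t)) \equiv 0$ on the trivial branch since $R \equiv r_0$ there.

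Next I would compute the $2 \times 2$ Jacobian of $(\tilde{F}_t, R_t)$ in the variables $(a, T)$ at $\alpha(T_*)$. The computations in the proof of Theorem \ref{red1} already give $R_{tt}(\alpha(t)) \equiv 0$ and $R_{ta}(\alpha(t)) = 2\sin(\omega_1 t)$ with $\omega_1 := \sqrt{(\lambda_n m + M)/r_0^3}$, while differentiating (\ref{dfbnew}) once more in $t$ yields $\tilde{F}_{tt}(\alpha(T_*)) = -\omega\sin(\omega T_*) = -\omega \ne 0$. The Jacobian determinant therefore simplifies to $-\tilde{F}_{tt}(\alpha(T_*))\, R_{ta}(\alpha(T_*)) = 2\omega\sin\!\bigl(\tfrac{\pi}{2}\sqrt{(\lambda_n m + M)/(mn+M)}\bigr)$, which is nonzero precisely when $\sqrt{(\lambda_n m + M)/(mn+M)}$ is not of the form $2k$ for a positive integer $k$, equivalently when $\lambda_n \ne nq^2 + \tfrac{M}{m}(q^2-1)$ for every positive even integer $q = 2k$ — exactly the hypothesis. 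With the Jacobian invertible, the Implicit Function Theorem produces smooth curves $b \mapsto (a(b), T(b))$ with $a(0) = a_0$, $T(0) = T_*$ along which $\tilde{F}_t = R_t = 0$; since $F_t = b\tilde{F}_t = 0$ identically along this curve, Lemma \ref{simmetry} applied through $(\dagger\dagger)$ gives the desired $4T(b)$-periodicity.

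The main subtlety, and essentially the only novelty relative to Theorem \ref{red1}, is the parity bookkeeping: because the period is halved from $T_0$ to $T_0/2$, the argument of the sine appearing in $R_{ta}(\alpha(T_*))$ picks up a factor of $1/2$, so the vanishing condition now corresponds to $\omega_1/\omega$ being an even positive integer rather than any positive integer. This is what replaces the hypothesis on arbitrary $p$ in Theorem \ref{red1} by a hypothesis on even $q$ here, and I expect verifying this correspondence to be the only non-routine step.
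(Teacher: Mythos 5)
Your proposal is correct and follows essentially the same route as the paper: the paper writes $F_t=bW$ with $W=\tilde F_t$, locates the same base point $(a_0,0,T_0/2)$ via $F_{bt}(\alpha(t))=\cos(\omega t)$, and applies the Implicit Function Theorem, its nonvanishing second component of $\nabla W\times\nabla R_t$, namely $-\tfrac{\pi}{T_0}R_{ta}(\alpha(T_0/2))$, being exactly your $2\times 2$ Jacobian determinant $-\tilde F_{tt}R_{ta}=2\omega\sin\bigl(\tfrac{\pi}{2}\sqrt{(\lambda_n m+M)/(mn+M)}\bigr)$. Your identification of the even-integer hypothesis with the nonvanishing of this sine also matches the paper.
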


\begin{proof}
	We follows the same lines of the proof of Theorem \ref{red1}, but now considering consider the system
	\begin{equation}\label{s2}
	F_t(a,b,T)=0 \quad \text{and} \quad R_{t}(a,b,T)=0.\\
	\end{equation}
		
	By (\ref{def of tilde F new}) there exists a function $W(t,a,b)$ such that
	\begin{equation*}
	\begin{split}
	F_t(a,b,t)&=bW(a,b,t),\\
	F_{bt}(a,b,t)&=W(a,b,t)+bW_{b}(a,b,t),
	\end{split} 
	\end{equation*}
	
	for all $(a,b,t). $ In particular
	\begin{equation*}
	F_{bt}(a,0,t)=W(a,0,t),
	\end{equation*}
	
 for all $(a,t)$. Now we search for points $(a,b,T_{*})$ such that 
	\[
	W(a,b,T_{*})=0 \quad \text{and} \quad R_{t}(a,b,T_{*})=0.
	\]
	
	To this end, we need to study the zeroes of the function $F_{bt}(a,b,T_{*})$. From (\ref{dfbnew}) we have
	\begin{equation*}\label{ecu de ut}
	F_{bt}(\alpha(t))=\cos\left(\left(\frac{M+m n}{r_{0}^{3}}\right)^{1/2} t\right),
	\end{equation*}
    Therefore, $F_{bt}(\alpha(T_{0}/2))=0$ and,
	\[
	W(a_{0},0,T_{0}/2)=0 \quad \text{and} \quad R_t(a_0,0,T_{0}/2)=0.
	\]
	
	Once again, with the aim of applying the Implicit Function Theorem we consider the gradiente vector $\nabla W(a_{0},0,T_{0}/2)$ and $\nabla R_{t}(a_{0},0,T_{0}/2)$. A direct computation shows that 
$$\nabla W(\alpha(T_{0}/2))\times \nabla R_t(\alpha(T_{0}/2))=\left( 0,-\frac{\pi}{T_{0}}R_{ta}(\alpha(T_{0}/2)),\frac{1}{2} R_{ta}(\alpha(T_{0}/2)) F_{bbt}(\alpha(T_{0}/2)) \right).$$

Notice that the second entry of above vector is given by
\[
R_{ta}(\alpha(T_{0}/2))=2 \sin\left(\frac{\pi}{2}\sqrt{\frac{\lambda_{n}m+M}{mn+M}}\right),
\]

which by hypothesis is different from zero. Then, there exist $\delta>0$ and two conti\-nuous functions $T_{*},a_{*}:(-\delta,\delta)\rightarrow\R$ such that
	\begin{equation*}
	b\rightarrow T_{*}=T_{*}(b), \quad  b\rightarrow a_{*}=a_{*}(b),
	\end{equation*}
	
 with $T_{*}(0)=T_{0}/2$, $a_{*}(0)=a_{0}$ and
	\[
	W(\gamma_2(b))=0 \quad \text{and} \quad R_{t}(\gamma_{2}(b))=0,
	\]
	with $\gamma_2:(-\delta,\delta) \to \R^3,$
	
	\begin{equation}\label{gamma curve2}
	b\to \gamma_2(b)=(a_{*}(b),b,T_{*}(b)), \quad \quad \gamma_2(0)=(a_{0},0,T_{0}/2).
	\end{equation}
	
Then for $b\in(-\delta,\delta)$ it follows
\begin{equation*}
\begin{split}
F_t(\gamma_{2}(b))=bW(\gamma_{2}(b))&=0,  \quad \text{and} \quad R_{t}(\gamma_{2}(b))=0,
\end{split}
\end{equation*}

Once again, by Lemma \ref{simmetry} we have that for any $b$, the functions $f(t)=F(a,b,t)$ y $r(t)=R(a,b,t)$ define an odd/even $4T_{*}(b)$-periodic solution of the reduced problem (\ref{reduced1}). 
\end{proof}

%proving that over the path $\gamma_{2}(b)$ the functions $F(a,b,t)$ and $R(a,b,t)$ satis\-fies (\ref{s2}) which by Lemma \ref{simmetry} induce a $4T_*(b)$-periodic solution of the reduced problem (\ref{eq zeros}) con $f(t)=F(a_{*}(b),b, t)$ an odd function y $r(t)=R(a_{*}(b),b,t)$ and even function for each $b\in(-\delta,\delta)$.

\subsection{Periodic solutions for the $(n+1)$-body problem}

We will only consider the case when the  solutions of the $(n+1)$-body problem comes from solutions of Equation $(\dagger)$. The case when the solutions of the $(n+1)$-body problem  come from solutions of equation  $(\dagger \dagger)$ is similar.

In order to show that we can find a non trivial periodic solution with $b>0$ we just need to check that the function $\Theta(a,b,t)$ is not constant along the curve $\gamma_{1}(b)=(a(b),b,T(b))$ defined in Equation (\ref{gamma1 curve}). This is true because every triple $(a(b),b,T(b))$ solves the equation $F=0$ and $R_t=0$ and by continuity, if $b\to \Theta(a(b),b,T(b)))$ is not constant, then we can find a $b$ such that $\gamma_{1}(b)$ satisfies that $\Theta(\gamma_{1}(b))=n_1\pi/n_2$ with $n_1$ and $n_2$ integers. See Remark \ref{rem1}.

One way to study the behavior of the function $\Theta$ along the curve $\gamma_{1}(b)$ is by reparametrizing the curve $\gamma_{1}(b)$ as 
$\beta(\tau)=\gamma_{1}(b(\tau))$ where $\beta$ is an integral curve of the vector field $X$ defined as

$$X=\nabla \tilde{F}\times \nabla R_t=\left( \tilde{F}_b R_{tt}-\tilde{F}_t R_{tb},\tilde{F}_t R_{ta}-\tilde{F}_a R_{tt},\tilde{F}_a R_{tb}-\tilde{F}_b R_{ta} \right).$$

As we have done before, we can explicitly  compute as many partial derivatives of the functions $\tilde{F}$ and $R_t$ at the bifurcation point $p_0=\beta(0)=\gamma(0)$. Therefore we can compute as many derivatives as needed for the curve $\beta$ at $\tau=0$ and since we can also compute as many partial derivatives of the function $\Theta$ at $p_0$ then using the chain rule we can compute the first and second derivative of the function 
$$\xi(\tau)=\Theta(\beta(\tau)),$$
at $\tau=0$. A direct computation shows that $\xi^\prime(0)=0$ and a long direct computation, see \cite{Suarez}, shows that
\begin{eqnarray*}
	\xi^{\prime \prime}(0)&=& \big(A(n,m,M,r_0)+B(n,m,M,r_0)R_{at}(\alpha(T_{0}))\big)R^{2}_{at}(\alpha(T_{0})),
\end{eqnarray*}
with $A=A(n,m,M,r_0)$ and $B=B(n,m,M,r_0)$ given by
\begin{equation*}
	\begin{split}
		A&=\frac{3r_{0}^{5/2}\pi\bigg[9(\lambda_{n}m+M)\Big(\lambda_{n}m+24M-4mn+16M\sqrt{\frac{\lambda_{n}m+M}{r_{0}^{3}}}\Big)-8M(M+mn)\Big(9+8\sqrt{\frac{\lambda_{n}m+M}{r_{0}^{3}}}\Big)\bigg]}{16m^{2}n^{2}(-\lambda_{n} m+4mn+3M)(\lambda_{n}m+M)^{1/2}}\\ \vspace{0.7 cm}
		B&=\frac{9Mr_{0}^{5/2}(M+mn)^{5/2}\Big(3+2\sqrt{\frac{\lambda_{n}m+M}{r_{0}^{3}}}\Big)}{m^{2}n^{2}(\lambda_{n}m+M)(\lambda_{n}m-3M-4mn)^{2}}.
	\end{split}
\end{equation*}

%$$\xi^{\prime \prime}=\frac{3 \pi  r_0 \sqrt{\frac{m n+M}{\lambda_n m+M}} \left(-3 \lambda_n^2 m^2+4 m n (3 \lambda_n m+5 M)+7 \lambda_n m M+18 M^2\right) \sin ^2\left(\pi  \sqrt{\frac{\lambda_n m+M}{m n+M}}\right)}{2 m^2 n^2 (-\lambda_n m+4 m n+3 M)}.$$

Which implies that for most of the choices of $n$, $M$ and $m$ the second derivative of $\xi(\tau)=\Theta(\beta(\tau))$ at $\tau=0$ is different from zero.

\section{Numerical solutions}

In this section we present the analytic continuation of a periodic solution that is close to the solution displayed in the youtube video {\color{blue}http://youtu.be/2Wpv6vpOxXk}. It can be checked that  $m=92$, $M=242$, $n=3$, $r_0=11$ and

$$q_0=(a_0,b_0,T_0)=(1.84153, 3.79392, 7.31715),$$

provides a periodic solution. Figure \ref{fig3} shows the trajectory of one of the three bodies with mass 92.

 \begin{figure}[hbtp]
\begin{center}\includegraphics[width=.45\textwidth]{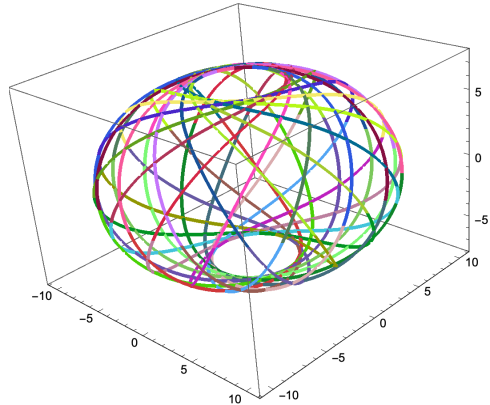}
\end{center}
\caption{Trajectory of one of the bodies with mass 92 for the solution of the 4 body problem with initial conditions $q_0=(a_0,b_0,T_0)=(1.84153, 3.79392, 7.31715)$   }\label{fig3}
\end{figure}

When we extend this solution, similar to the results in \cite{P2}, the collection of points $(a,b,T)$ that satisfy the equations $F(a,b,T)=0$ and $R_{t}(a,b,T)=0$ with $b\ne 0$ have the shape of a fork, where two of the edges goes to a points with $a=0$ corresponding to periodic solutions with collisions, the other edge seems to be unbounded and the fourth edge goes to a point where $b=0$. The value of $a$ for this limit point with $b=0$ is $a_{0}=\sqrt{\frac{\lambda_3 m+M}{r_0}}=\sqrt{22+\frac{92}{11 \sqrt{3}}}\approx 5.17965$ which was somehow expected because this is the $a$ corresponding to the solution where the mass in the center stays put and the other masses move along a circle. The value of $T$ for this limit point is near 5.03224. In other words the limit point with $b=0$ that we found by analytic continuation of the periodic solution displayed in the video is

$$q_1=(a_1,b_1,T_1)= (5.17965,\, 0,\, 5.03224).$$

Figure \ref{fig1} shows the points $q_0$ and $q_1$ as part of the family of reduced periodic solutions of the 4-body problem. This paper comes from an effort to understand the reason of the value of $T$ in the point above. Initially we were expecting a value for $T$ equal to $11 \sqrt{\frac{33}{92 \sqrt{3}+726}} \pi\approx 6.67179$ which is half of the period of the circular solution where the mass in the center stays still. The result of this explanation is Theorem \ref{red1}, it explain how actually this limit point is a bifurcation of the solutions coming from the line 
$$L=\Big\{(a_{0},0,t)=(\sqrt{22+\frac{92}{11 \sqrt{3}}},0,t):t\in\mathbb{R}\Big\},$$

 which even though they do not provide geometric periodic solutions, they satisfy the equation $F(a,b,t)=0$ and $R_{t}(a,b,t)=0$. Our main theorem predicts the value of bifurcation for $T$ at $11 \sqrt{\frac{11}{518}} \pi\approx 5.03586$ which completely explains our numerically value of $T$ in the point $q_1$. Figure \ref{fig1} shows the solutions with $b\ne 0$ that form a fork shape curve and the solution with $b=0$, the line $L$ defined above. It is worth pointing out that this line $L$ is the image of the curve $\alpha$ that we used in the proof of the main results of this paper.

 \begin{figure}[hbtp]
\begin{center}\includegraphics[width=.32\textwidth]{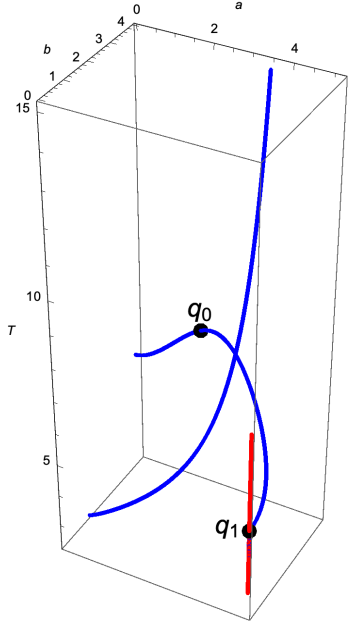}
\end{center}
\caption{Two bifurcation points for the spatial 4-body problem. For these solutions the masses of three of the bodies is 92 and the mass of the fourth body is 242.  One of the bifurcations is explained in the paper \cite{P2}, and the other bifurcation, the one that takes place on the line $L$, is explained in this paper.}\label{fig1}
\end{figure}

Just to give an application to our Theorem \ref{red1}, we decided to consider periodic solutions of the 4-body problem produced when $m=3$, $M=7$ and $r_0=11$. According to Theorem \ref{red1}, when we consider the system 
\begin{equation}\label{thesys}
\begin{cases}\begin{split}F(a,b,T)&=0,\\ 
R_{t}(a,b,T)&=0,
\end{split}
\end{cases}
\end{equation}

it follows that, along the line of solutions of this system 
\[
\Big\{(a_0,b,T):a_0=\sqrt{\frac{1}{11} \left(\sqrt{3}+7\right)},\, b=0\Big\},
\] 
there is a bifurcation at $$p_0= (a_0,0,T_0)=\Big(\sqrt{\frac{1}{11} \left(\sqrt{3}+7\right)},0,\frac{11 \sqrt{11} \pi }{4}\Big)\approx (0.890967,0,28.6536).$$

In order to find a non trivial solution near $p_0$, we took $b=0.05$ and we did a search by doing small changes of $a_0=0.890967$ and $T_0=28.6536$ to solve the system (\ref{thesys}). We found that the point
$$p_1=(a_1,b_1,T_1)=(0.8892815, 0.05, 28.708),$$

Satisfies that $|F(a_1,b_1,T_1)|$ and $|R_{t}(a_1,b_1,T_1)|$  are smaller than $10^{-6}$. Using the point $p_1$ we did an analytic continuation to obtain solutions of the system (\ref{thesys}) with $b\ne 0$. Figure \ref{bif}
shows about 12,000 solutions found numerically, along with the solutions along the line $b=0$.

%Figura bif
 \begin{figure}[hbtp]
\begin{center}\includegraphics[width=.15\textwidth]{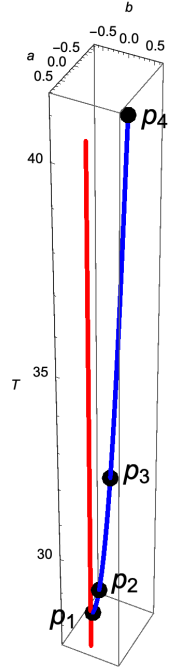}
\end{center}
\caption{Solutions of the system (\ref{thesys}) near the bifurcation point $p_0$. The point $p_1$ allowed us to start the analytic continuation by moving along the integral curve of the vector field $\nabla F\times \nabla R_{t}$. This figure shows about 12,000 points with $b\ne0$.  The points $p_1$, $p_2$ and $p_3$ are points where $\Theta$ equals to $\frac{3\pi}{4}$, $\frac{4\pi}{5}$, and $\pi$ respectively.   }\label{bif}
\end{figure}

We can compute the exact value for $\Theta(p_0)$, it is $\frac{ a_0 T_0}{11}=\frac{\pi \sqrt{7+\sqrt{3}}}{4}\approx 2.32086$. We also have that  $\Theta(p_1)\approx 2.32327$, this time the computation has to be done numerically. 

%Figura gra
 \begin{figure}[hbtp]
\begin{center}\includegraphics[width=.6\textwidth]{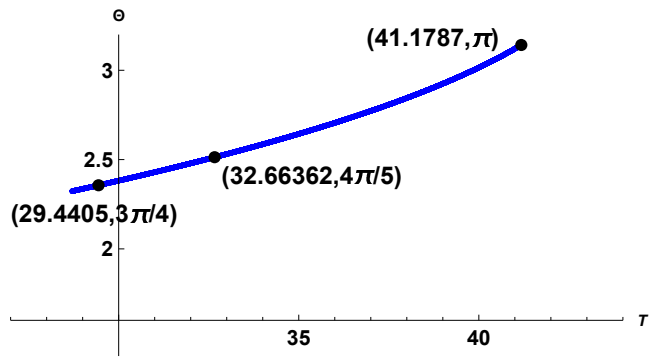}
\end{center}
\caption{This is the graph of  $\Theta$ against $T$ along the solution of the system  $F(a,b,T)=0,\,  R_{t}(a,b,T)=0$. We have highlighted three points that represents periodic solutions.  }\label{thvsT}
\end{figure}

Since the function $\Theta$ along the solutions of the system (\ref{thesys}) changes, by continuity we have that for some solution $p=(a,b,T)$ of the system $\Theta(p)= \frac{u}{v} 2 \pi$ with $u$ and $v$ whole numbers. Therefore we get that the functions

\begin{eqnarray*}
x(t) &=&\left(0,0,F(a,b,t) \right)\\
y^1(t)&=& \left(R(a,b,t)\cos(\Theta(a,b,t))\, ,\, R(t)\sin(\Theta(a,b,t)),\,-\frac{11}{9}\, F(a,b,t)\right) \\
y^2(t)&=& \left(R(a,b,t)\cos(\Theta(a,b,t)+\frac{2\pi}{3})\, ,\, R(t)\sin(\Theta(a,b,t)+\frac{2\pi}{3}),\,-\frac{11}{9}\, F(a,b,t)\right) \\ 
y^3(t)&=& \left(R(a,b,t)\cos(\Theta(a,b,t)+\frac{4\pi}{3})\, ,\, R(t)\sin(\Theta(a,b,t)+\frac{4\pi}{3}),\,-\frac{11}{9}\, F(a,b,t)\right), \\
\end{eqnarray*}

will eventually close and therefore the solution of the $4$-body problem given by these functions will be, not only pseudo periodic, but periodic. We have noticed that along the branch of solutions of the system (\ref{thesys}) with $b\ne0$ (see Figure \ref{bif}),  $\Theta$ is a function of $T$, see Figure \ref{thvsT}. After noticing that the values of $\Theta$ increase up to numbers bigger that $\pi$, we searched for the solutions that satisfy $\theta(T) =3\pi/4$, $\theta(T) =4\pi/5$ and $\theta(T) =\pi$. We found that the points 
\begin{eqnarray*}
p_2&=&(a,b,T)=(0.866953, 0.187583, 29.4405), \\
p_3&=&(a,b,T)=(0.775642, 0.400635, 32.6636), \\
p_4&=&(a,b,T)=(0.547954, 0.634946, 41.1787), 
\end{eqnarray*}

satisfy that $\Theta(p_1)=\frac{3\pi}{4}$, $\Theta(p_3)=\frac{4\pi}{5}$, and   $\Theta(p_4)=\pi$. Figure \ref{threeorbits} shows the image of the function $y^1(t)$ for these three solutions $p_1$, $p_2$ and $p_3$.

 \begin{figure}[hbtp]
\begin{center}\includegraphics[width=.3\textwidth]{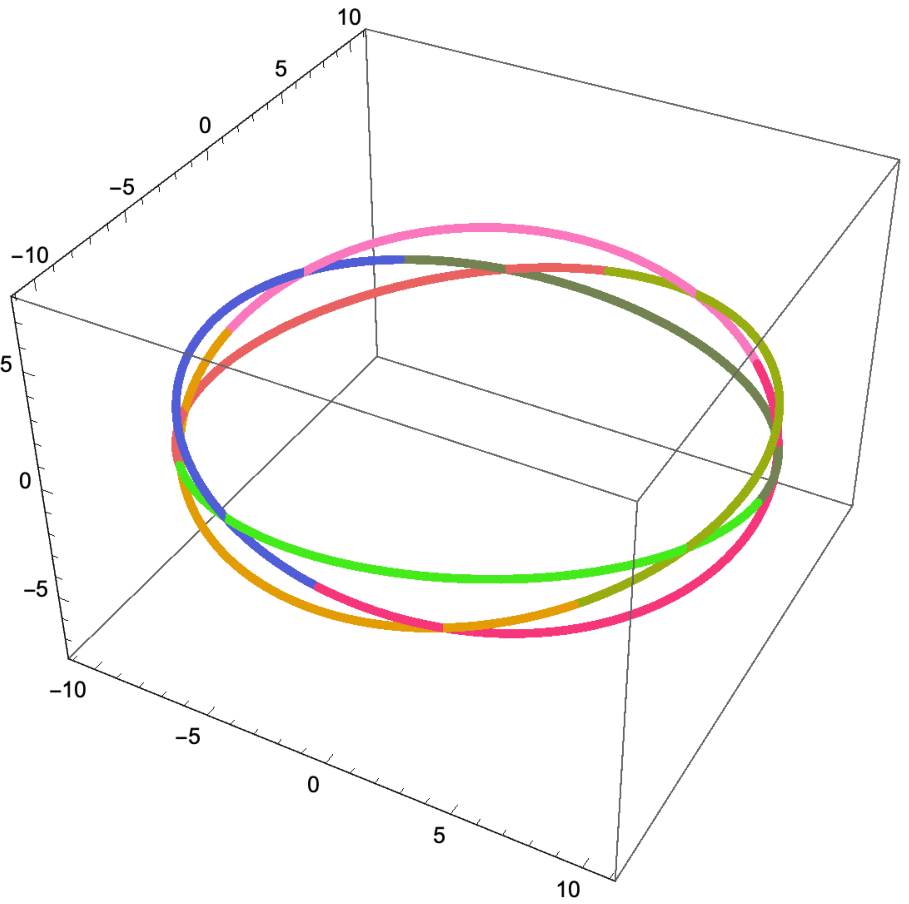} \includegraphics[width=.3\textwidth]{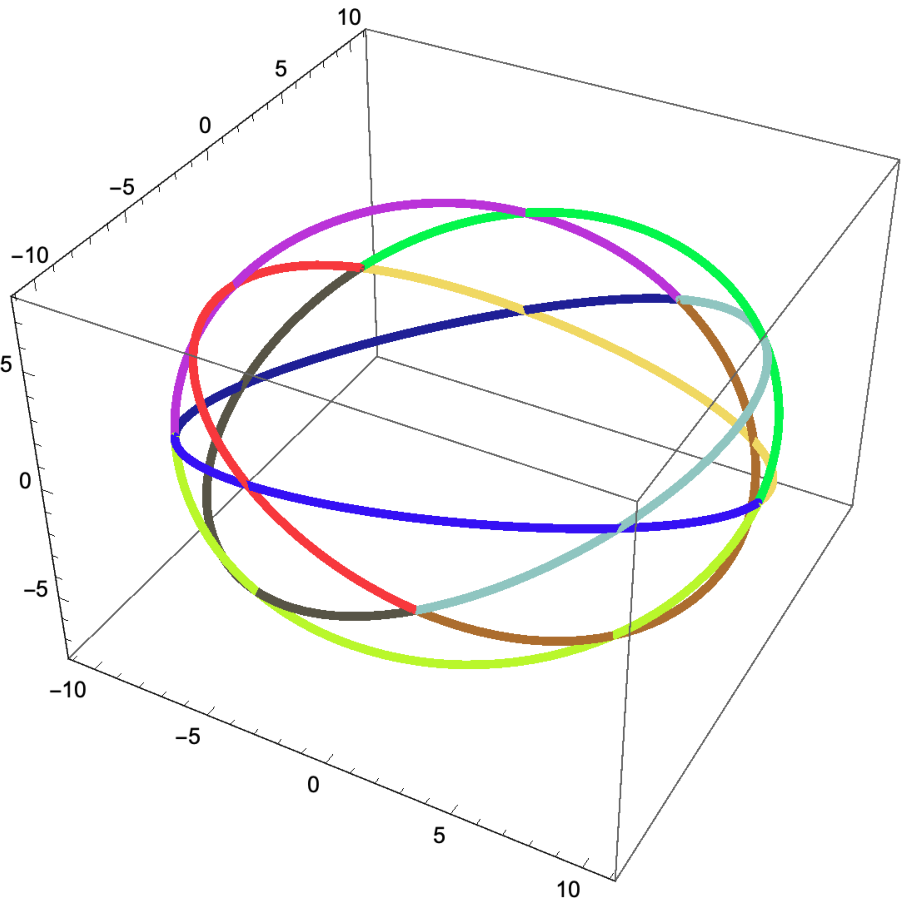} \includegraphics[width=.3\textwidth]{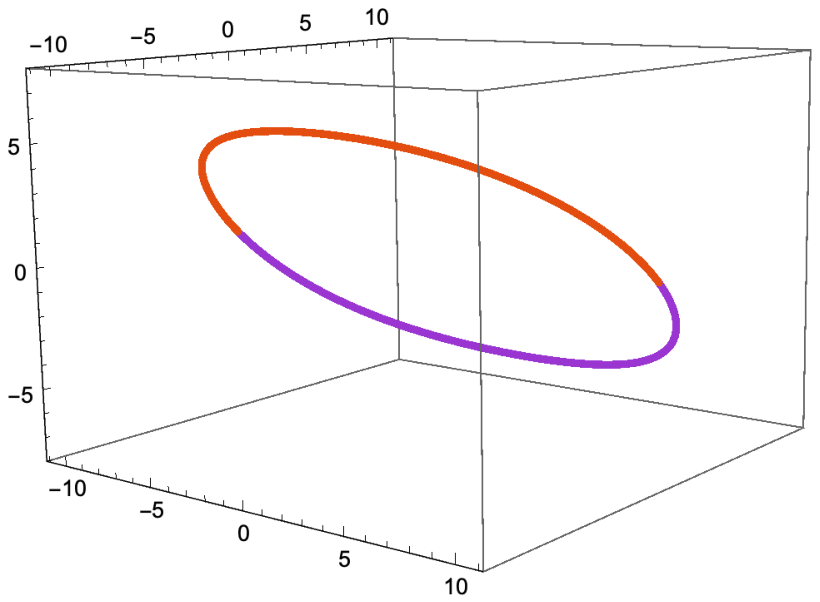}
\end{center}
\caption{This figure shows the graph of  the function $y^1(t)$ for the periodic solutions provided by the points $p_2$, $p_3$ and $p_4$ respectively. }\label{threeorbits}
\end{figure}

For the periodic solution given by the solution $p_4$, Figure \ref{fig2} shows the image of the functions $x(t)$, $y^1(t)$, $y^2(t)$ and $y^3(t)$.

\begin{figure}[hbtp]
\begin{center}\includegraphics[width=.45\textwidth]{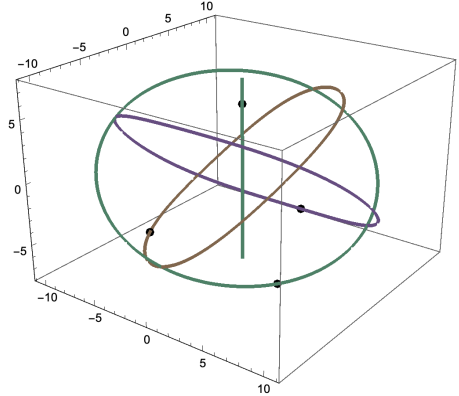}
\end{center}
\caption{For this periodic solution $m=3$, $M=7$, $r_0=11$ and $n=3$. This solutions was found by first finding the bifurcation point given by Theorem \ref{red1} and then finding a solution with $b\ne 0$ that is nearby, and doing analytic continuation of this latter point. For this solution $\theta(T)=\pi$. }\label{fig2}
\end{figure}

 \end{document}